\newtheorem{proposition}{Proposition}
\newtheorem{property}{Property}
\newtheorem{remark}{Remark}
\def\be{\begin{equation}}
\def\ee{\end{equation}}
\def\ben{\begin{eqnarray}}
\def\een{\end{eqnarray}}
\newcommand{\la}{\langle}
\newcommand{\ra}{\rangle}
\newcommand{\til}{\tilde}
\newcommand{\wt}{{w}}
\newcommand{\wtkp}{{w}^{k+1}}
\newcommand{\wtik}{\wt_i^k}
\newcommand{\cik}{c_i^k}
\newcommand{\wtikp}{\wt_i^{k+1}}
\def\kj{k \setminus j}
\def\rj{r \setminus j}
\def\op{\hat{P}}
\def\oe{\hat{E}}
\def\S{{\cal{S}}}
\def\SS{{\cal{S}}}
\def\SH{{\cal{H}}}
\def\SV{{\cal{V}}}
\def\SWr{\til{{\cal{W}}}_r}
\def\SWo{\til{{\cal{W}}}_0}
\def\SVr{\til{{\cal{V}}}_r}
\def\SVo{\til{{\cal{V}}}_0}
\def\SVK{{\cal{V}}_k}
\def\SWKP{{\cal{W}}_{k+1}}
\def\SVKJ{{\cal{V}}_{\kj}}
\def\SWKJ{{\cal{W}}_{\kj}}
\def\SWKrJ{{\cal{W}}_{\rj}}
\def\SW{{\cal{W}}}
\def\SWK{{\cal{W}}_k}
\def\SWKr{{\cal{W}}_r}
\def\SWC{{\cal{W}^\bot}}
\def\SVC{{\cal{V}^\bot}}
\def\EVW{\hat{E}_{\SV \SWC}}
\def\EVWr{\hat{{E}}_{\SVr \SWr}}
\def\EVV{\hat{E}_{\SV \SVC}}
\def\EVKW{\hat{E}_{\SV_k \SWC}}
\def\EVWKP{\hat{E}_{\SV_{k+1} \SWC}}
\DeclareMathOperator{\Span}{span}
\def\C{\mathbb{C}}
\def\emptyy{\{0\}}
\def\bmth#1{{\mbox{\boldmath$#1$}}}
\def\ei{{\bmth{e}}_i}
\newcommand{\ut}{w}
\newcommand{\utik}{w_i^k}
\title{Sparse Representations for Structured Noise Filtering}
\author{Bishnu P. Lamichhane\footnotemark[1]\quad and
Laura Rebollo-Neira\thanks{Aston University,
Aston Triangle, B4 7ET, UK, {\tt b.p.lamichhane@aston.ac.uk,
rebollol@aston.ac.uk }}}
\begin{document}

\maketitle
%

\abstract{The role of sparse representations in the context of structured 
noise filtering is discussed. A strategy, especially conceived  so as  to
address problems of an ill posed nature, is  presented. The proposed 
approach revises and extends the Oblique Matching Pursuit technique.
It is shown that, by working with an 
orthogonal projection of the signal to be filtered, it is possible 
to apply orthogonal matching  pursuit like strategies 
in order to accomplish the required signal discrimination.}

\section{Introduction}
The problem of structured noise filtering 
is introduced in \cite{BS94}, where a number of relevant 
signal processing applications are discussed. It can be 
posed as follows: consider that a signal $f$, represented 
as an element of an inner product space $\SH$, is produced 
by the superposition of two components, $f_1$ and $f_2$, 
each of which belongs to a different subspace of $\SH$. More 
precisely, $f=f_1 + f_2$ with $f_1 \in \S_1 \subset \SH$ and 
$f_2 \in \S_2 \subset \SH$. Structured noise filtering 
(to be also termed signal discrimination or signal splitting)
consists of singling out a particular component from the signal $f$. 
Provided that $\S_1$ and $\S_2$ are given,  such that 
$\S_1 \cap \S_2 = \emptyy$,  one component, say $f_1$, can 
be extracted from $f$  by an oblique projection 
onto $\S_1$ and along $\S_2$. On the contrary, the situation 
$\S_1 \cap \S_2 \ne \emptyy$ implies that the signal decomposition is 
not unique and the splitting  can not be tackled 
in a straightforward manner by
oblique projections. Moreover, 
even when theoretically the condition $\S_1 \cap \S_2 = \emptyy$ is 
satisfied, if the subspaces $\S_1$ and $\S_2$ are not well separated, 
the construction of the corresponding projector 
becomes ill posed.  Consequently, the signal splitting can not be achieved 
by numerical calculations in finite precision arithmetics. 
Here we focus on such a situation. {\em{We assume that the given
subspaces $\S_1$ and $\S_2$ are theoretically disjoint,  but
close enough to yield an ill posed problem}}.

Our proposal for the numerical realization of the signal splitting 
is focussed on the search of a subspace of the given
$\S_1$, where a class of signals is considered to lie.  It will be 
assumed throughout the paper that the class of signals to be 
considered is $K$-sparse in a spanning set for
$\S_1$. By this we mean that given a spanning set for $\S_1$ 
the corresponding linear superposition of a signal  
 has at most $K$ nonzero coefficients. The $K$-value should 
be less than or equal to the dimension of the subspace $\S_r \subset  
\S_1$ for which the construction of an oblique projection onto 
itself, and along $\S_2$, is well conditioned. This assumption 
is quite realistic, considering that in practice there is often  a
lack of complete knowledge on the actual subspace 
$\S_1$ and to be on the safe side one may overestimate it. 
 
The main motivation of this paper is to highlight the 
essential role that sparse representations play in the problem 
of structured noise filtering. Such representations have been 
the subject of considerable work over the last ten years
\cite{DT96,DMA97,Dev98,CDS98,Tem99,AHSE01,GMS03,Tro04,
GN04,Don06,CR06,CT06,GFV06}. We will dedicate special 
attention to discuss and illustrate `why' and `how' sparse representations 
are relevant in the present context.

A technique, termed Oblique Matching Pursuit 
(OBMP), has been recently advanced in relation to the above described 
problem \cite{Reb07b}. Such a technique evolves by stepwise 
selection of the sought subspace. The selection criterion is 
based on the consistency principle \cite{UA94,Eld03}. 
In this communication we revise and extend the OBMP technique. 
We show that by working with a particular
projection of the signal at hand, rather than with the signal itself, 
one can make use of previously proposed orthogonal 
matching pursuit like methodologies,
so as to look for the signal subspace yielding the correct splitting. 

The paper is organized as follows: In Section \ref{sec2} we 
introduce the mathematical setting for signal representation to 
be adopted here,  together with a discussion on the construction of 
oblique projections. Section \ref{sec3}  highlights the  importance of
the search for sparse solutions in the construction of oblique 
projectors for structured noise filtering. The proposed strategy 
is discussed in Section \ref{sec4}. The conclusions are presented in 
 Section \ref{sec5}. 
                                        
\section{Mathematical Framework}
\label{sec2}
We consider a signal, $f$, to be an element of an inner product space 
$\SH$. The square norm $||f||^2$ is then induced by the 
inner product that we indicate as $\la f, f \ra$ and is 
defined in such a way that  if  $a$  is a  number,  
$\la af, f \ra= a^\ast\la f, f \ra$, with $a^\ast$ representing 
the complex conjugate of $a$. For
 the purpose of this contribution we  assume 
that all the signals of interest belong to some finite 
dimensional subspace $\SV$ of $\SH$. Thus, there exists a 
finite set  $\{v_i \in \SH\}_{i=1}^M$  spanning $\SV$.  Consequently,   
for every signal in $\SV$ there is a set of numbers 
$\{c_i\}_{i=1}^M$  which allows us to 
express the signal as the linear superposition 
$$f= \sum_{i=1}^M c_i v_i,$$ 
which is also called {\em atomic decomposition}.

Although a signal was defined as an element of an abstract 
inner product space, for processing  purposes we need a numerical 
representation of such an object. The process of transforming a 
signal into a number is refereed to as  {\em {measurement}}
or {\em {sampling}}. 
The  mathematical operation performing  such a transformation is  then a
{\em {functional}}. Since considerations will be restricted 
to linear measurements,
we represent them by {\em {linear functionals}}. 
Thus,  making use of Riesz theorem \cite{RS80} we can  express a linear 
measurement as
$m= \la  w , f \ra $ for some $w \in \SH.$
Considering now $M$ measurements $m_i,\, i=1,\ldots,M$,
each of which is obtained by a measurement vector $w_i$, we have a numerical 
representation of $f$ as given by
\be
m_i= \la  w_i , f \ra,\quad i=1,\ldots,M.
\ee
The question concerning the possibility of reconstructing 
$f\in \SV$ from measurements obtained with 
vectors in a different subspace has been 
addressed in \cite{UA94,UZ97,Eld03,Hu07}.
It is in principle obvious that every signal in $\SV$ can be reconstructed
from  vectors $\{w_i \in \SH\}_{i=1}^M$ spanning a 
subspace $\SW \subset \SH$, provided that those vectors give rise
to a representation of any projector onto $\SV$. The difference 
in using one projector or another appears  when the projector 
acts on signals outside a subspace.
We summarize next some features relevant to the construction 
of projectors.
\subsection{Oblique projectors}
\label{op}
Every idempotent operator is a projector. Hence, an operator 
$\oe$ is a projector if
$\oe^2 =\oe$. The projection is
along its null space and onto its range.
When these subspaces
are orthogonal $\oe$ is called an
orthogonal projector, which is the case if and only if
$\oe$ is self-adjoint. Otherwise it is called {\em {oblique projector}}. 

Given two closed subspaces,
$\SV \in \SH$ and $\SWC \in \SH$,
such that $\SS=\SV + \SWC$ and $\SV \cap \SWC = \emptyy$,
the oblique projector operator onto $\SV$
along $\SWC$ will be represented as
$\EVW$. Then $\EVW$ satisfies
$\EVW^2= \EVW$
and, consequently, 
\ben
\EVW f&=& f, \quad \text{if} \quad f \in \SV  \nonumber\\
\EVW f&=& 0, \quad \text{if} \quad f \in \SWC. \nonumber
\een
In the particular case for which $\SWC =\SV^\bot$ the operator 
$\EVV$ is an orthogonal projection onto $\SV$. For indicating  an orthogonal 
projector onto a subspace, ${\cal{X}}$ say, we  use the 
particular notation $\op_{\cal{X}}$.

Consider that $\{v_i\}_{i=1}^M$ is a spanning set for 
$\SV$ and $\{u_i\}_{i=1}^M$ a spanning set for $\SW$, which is 
the orthogonal complement of $\SWC$ in $\SS$, i.e., 
$\SS= \SW \oplus \SWC$, with $\oplus$ 
indicating the orthogonal sum. Thus the spanning sets of 
$\SV$ and $\SW$  satisfy $\{u_i\}_{i=1}^M= \{\op_{\SW} v_i\}_{i=1}^M$. 
If the condition 
$\SV \cap \SWC = \emptyy$ is fulfilled, the operator 
$\EVW$ can be constructed as
\be
\label{evw}
\EVW=\sum_{i=1}^M v_i \la  w_i , \cdot \ra,
\ee
where the operation $\la w_i , \cdot \ra$ indicates that $\EVW$ 
acts by performing inner products. The vectors $\{w_i\}_{i=1}^M$ 
in \eqref{evw} are 
obtained from vectors $\{u_i\}_{i=1}^M$ through the equation 
\cite{Eld03,Reb07a}
\be
\label{wdu}
w_i= \sum_{j=1}^M {g}^\dagger_{i,j} u_j
\ee
with ${g}^\dagger_{i,j}$ the element $(i,j)$
of a matrix $G^\dagger$, a pseudo inverse of the matrix $G$
the elements of which are given by the inner products  
$\la u_i,v_j\ra, i,j=1,\dots,M$.
\begin{remark} The pseudo inverse allows for the possibility  of the 
spanning sets of $\SV$ and $\SW$ being redundant. However, the condition
$\SV \cap \SWC = \emptyy$ implies that $\SV$ and $\SW$ should have 
the same dimension and therefore the rank of $G$ equals the 
dimension of $\SV$ and $\SW$. 
\end{remark} 
For later convenience, we introduce at this point an 
alternative representation of $\EVW$. To this end, 
denoting as
$\ei, i=1,\ldots,M$ the standard orthonormal basis in
$\C^M$, we
define the operators $\hat{V}: \C^M \to \SV$ and $\hat{W}:\C^M \to \SW$
as
$$\hat{V}=\sum_{i=1}^M v_i \la \ei , \cdot \ra,
\;\;\;\;\;\;\;\;\;\;\;\;\;\;
\hat{W}=\sum_{i=1}^M  u_i \la  \ei, \cdot \ra.$$
Thus the corresponding adjoint operators  $\hat{W}^\ast: \SW \to \C^M $ and
$\hat{V} ^\ast: \SV \to \C^M$ are
$$\hat{V}^\ast=\sum_{i=1}^M \ei \la v_i , \cdot \ra,
\;\;\;\;\;\;\;\;\;\;\;\;\;\
\hat{W}^\ast=\sum_{i=1}^M \ei \la u_i , \cdot \ra.$$ 
It follows from the above definitions that 
$\op_{\SW} \hat{V}= \hat{W}$ and
$\hat{W}^\ast \op_{\SW} = \hat{W}^\ast.$

Considering that $\psi_n \in \C^M, \,n=1,\ldots,M$, 
are the eigenvectors of matrix $G=\hat{W}^\ast \hat{V}=\hat{W}^\ast \hat{W}$,
and assuming that there exist $N$ nonzero eigenvalues
$\lambda_n,\,n=1,\ldots,N$,  on ordering these eigenvalues 
in descending order we can express the matrix elements of the
Moore-Penrose pseudo inverse of $G$ 
as:
\be
{g}^\dagger_{i,j} = \sum_{n=1}^N \psi_n(i) \frac{1}{\lambda_n} \psi_n^\ast(j), 
\ee
with $\psi_n(i)$ the $i$-th component of $\psi_n$. 
Moreover, the orthonormal vectors 
\be
\label{xi}
\xi_n= \frac{\hat{W}\psi_n}{\sigma_n},\quad \sigma_n=\sqrt{\lambda_n},\quad
n=1,\ldots,N
\ee
are singular vectors of $\hat{W}$, which satisfies 
$\hat{W}^\ast \xi_n=\sigma_n \psi_n$, as it is immediate to verify.
By defining now the vectors $\eta_n,\,n=1,\ldots,N$ as
\be
\label{eta}
\eta_n=  \frac{\hat{V}{\psi_n}}{\sigma_n},
\quad \,n=1,\ldots,N,
\ee
the projector $\EVW$ in \eqref{evw} is recast in the fashion 
\be
\label{evw2}
\EVW=\sum_{n=1}^N \eta_n \la \xi_n , \cdot \ra.
\ee
Inversely, the representation  \eqref{evw} of  $\EVW$  arises from 
\eqref{evw2}, since 
\be
\label{wdu2}
w_i= \sum_{n=1}^N  \xi_n \frac{1} {\sigma_n} \psi^\ast_n(i), \quad i=1,\ldots,M.
\ee
\begin{proposition} 
\label{bio}
The vectors $\xi_n \in \SW,\,n=1,\ldots,N$ and  
$\eta_n \in \SV,\,n=1,\ldots,N$ 
given in \eqref{xi} and \eqref{eta}
are biorthogonal to each other and span $\SW$ and $\SV$, respectively.
\end{proposition}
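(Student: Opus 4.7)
The plan is to verify biorthogonality by a direct inner-product computation and then establish spanning by a dimension-counting argument based on the rank of $G$ and the hypothesis $\SV \cap \SWC = \emptyy$ recorded in the Remark.

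First I would prove biorthogonality. Using the definitions \eqref{xi} and \eqref{eta},
\be
\la \xi_n, \eta_m \ra
= \frac{1}{\sigma_n \sigma_m} \la \hat{W}\psi_n, \hat{V}\psi_m \ra
= \frac{1}{\sigma_n \sigma_m} \la \psi_n, \hat{W}^\ast \hat{V}\psi_m \ra
= \frac{1}{\sigma_n \sigma_m} \la \psi_n, G\psi_m \ra.
\ee
Since $G\psi_m = \lambda_m \psi_m = \sigma_m^2 \psi_m$ and the eigenvectors $\{\psi_n\}_{n=1}^N$ can be chosen orthonormal (as $G = \hat{W}^\ast\hat{W}$ is Hermitian positive semidefinite), this reduces to $\la \xi_n,\eta_m\ra = \delta_{n,m}$, as required. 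The same computation with $m=n$ confirms in passing that the $\xi_n$ are themselves orthonormal, as already remarked after \eqref{xi}.

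Next I would establish the spanning properties. The key observation is that
$\text{rank}(\hat{W}) = \text{rank}(\hat{W}^\ast \hat{W}) = \text{rank}(G) = N$,
so $\dim \SW = N$. Because the $N$ vectors $\xi_n = \hat{W}\psi_n/\sigma_n$ lie in the range of $\hat{W}$, i.e.\ in $\SW$, and are orthonormal (hence linearly independent), they must span $\SW$. For the $\eta_n$, I would invoke the Remark: under $\SV \cap \SWC = \emptyy$ one has $\dim \SV = \dim \SW = N$. The $\eta_n$ lie in $\SV$ by construction, and they are linearly independent because biorthogonality to the $\xi_n$ forces any relation $\sum_n c_n \eta_n = 0$ to give $c_m = \la \xi_m, \sum_n c_n \eta_n\ra = 0$ for every $m$. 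Hence $N$ linearly independent vectors in an $N$-dimensional space span $\SV$.

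I do not anticipate any serious obstacle: the biorthogonality is a one-line manipulation once one recognises that $\psi_n$ are eigenvectors of $G=\hat{W}^\ast\hat{V}$, and the spanning claim is almost immediate from rank counting. The only subtlety worth flagging explicitly is that $\dim \SV = N$ is not tautological from the definitions of $\hat{V}$ and $\hat{W}$ alone; it needs the disjointness hypothesis $\SV \cap \SWC = \emptyy$ recorded in the Remark, which is what keeps $\hat{V}$ from having strictly larger rank than $\hat{W}$ and guarantees that the $\eta_n$ exhaust $\SV$ rather than merely a proper subspace of it.
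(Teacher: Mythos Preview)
Your biorthogonality computation is identical to the paper's. For the spanning claims, however, you take a genuinely different route: the paper argues by explicit double inclusion, writing an arbitrary $g\in\SW$ as $\sum_i a_i w_i$ and using \eqref{wdu2} to rewrite it in the $\xi_n$, then using \eqref{xi} for the reverse inclusion (and declares the $\eta_n$ case ``equivalent''). You instead count dimensions: $\dim\SW=\operatorname{rank}\hat W=\operatorname{rank}(\hat W^\ast\hat W)=N$, so $N$ orthonormal vectors in $\SW$ must span it; and for $\SV$ you invoke the Remark that $\dim\SV=\dim\SW$ under $\SV\cap\SWC=\emptyy$, together with linear independence of the $\eta_n$ from biorthogonality. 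Your argument is cleaner and handles the $\eta_n$ case more transparently than the paper's ``equivalent'' hand-wave, at the cost of leaning on the Remark rather than being fully self-contained; the paper's version is more constructive but a bit clumsier. Both are correct.
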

\begin{proof}
Using \eqref{xi} and \eqref{eta} we have
\be
\la \xi_m ,\eta_n\ra = \frac{1}{\sigma_n \sigma_m} \la 
\hat{W}\psi_n, \hat{V}\psi_m\ra = \frac{1}{\sigma_n \sigma_m} 
\la\psi_n, \hat{W}^\ast\hat{V}\psi_m\ra =\delta_{n,m} 
\frac{\lambda_m}{\sigma_n \sigma_m}= \delta_{n,m},
\ee
which proves the biorthogonality property. 

The proof that $\Span\{\xi_n\}_{n=1}^N=\SW$ stems from  the fact that 
$\SW= \Span\{u_i\}_{i=1}^M = \Span\{w_i\}_{i=1}^M$, 
which allows us to express an arbitrary 
$g \in \SW$ as the linear 
combination
$g=\sum_{i=1}^M a_i w_i$. Then, using \eqref{wdu2}, we have
$g= \sum_{n=1}^N \til{a}_n \xi_n$ with $\til{a}_n= \frac{1}{\sigma_n}
\sum_{i=1}^M a_i \psi_n^\ast(i)$, which proves that 
$\SW \subset \Span \{\xi_i\}_{i=1}^N$. On the other hand 
for $g \in \Span \{\xi_i\}_{i=1}^N$ we can write $g= \sum_{n=1}^N 
d_n \xi_n$ and using \eqref{xi} we have $f=\sum_{i=1}^M \til{d}_i u_i $, 
with $\til{d}_i=  \frac{1}{\sigma_n}\sum_{n=1}^N d_n \psi_n(i)$. 
This proves that $\Span \{\xi_i\}_{i=1}^N \subset \SW$ 
and therefore $\Span \{\xi_n\}_{n=1}^N=\SW$.
The proof that $\Span \{\eta_n\}_{n=1}^N=\SV$ is equivalent 
to the previous one.
\end{proof} 
Since $\EVW f=f$ for every signal $f \in\SV$, regardless of the 
subspace $\SWC$, one can consider a different subspace $\SWC$ to 
construct measurement vectors and reconstruct a signal 
in $\SV$ using any set of such vectors, 
as long as  $\SV \cap \SWC = \emptyy$ with 
$\SWC$ the orthogonal complement of $\SW=\Span \{w_i\}_{i=1}^M$.
On the other hand, if $f \notin \SV$,  the
measurement vectors can be chosen to be suitable for the
particular processing task. For instance, if the goal is to produce an 
approximation $f_\SV$ of  $f$ in the subspace 
$\SV$, then in oder to minimize the distance $||f - f_\SV||$ 
 we  need  $f_\SV= \op_{\SV}f$.  Any other 
projection would yield a distance $||f -  \EVW f||$ which 
satisfies \cite{UA94}
$$ || f - \op_\SV f|| \le ||f -  \EVW f|| \le \frac{1}{\cos(\theta)} 
|| f - \op_\SV f||,$$
where $\theta$ is the minimum angle between the subspaces $\SV$ 
and $\SW$.  The equality is attained for $\SV= \SW$,
which correspond to the orthogonal projection.

However, if the aim were to discriminate from a 
signal produced by different phenomena only the component in 
$\SV$, then, as discussed below, an oblique projection 
turns to be appropriate.

Suppose that a signal $f$ is the superposition of two 
signals $f_1$ and $f_2$ with $f_1 \in \SV$ and $f_2$ in 
$\SWC$. The projection that will rescue $f_1$ 
from $f$ is $\EVW f$. A number of signal processing 
examples where an oblique projection is required are
given in \cite{BS94}. Provided that the subspaces hosting  the signal 
components are well separated, the discrimination of components with 
different structure is successful. Unfortunately, this is not 
 always the case and the construction of the necessary projector 
may generate an ill posed problem.

\section{The need for sparse representations in the present context}
\label{sec3}
This section is dedicated to illustrate, by recourse to
 a numerical example, 
the crucial role that the search for   
sparse solutions plays in the construction of oblique 
projectors for signal discrimination.  Consider that 
the spaces $\SV$ and $\SWC$, such that $\SV \cap \SWC = \emptyy$, 
are given, and the spanning set for $\SV$ is a basis of dimension $M$.
For constructing the dual vectors $w_i$ as in 
\eqref{wdu2} we first construct 
$\op_{\SWC}$, to generate the vectors 
\be
\label{defu}
u_i = v_i -   \op_{\SWC} v_i, \quad i=1,\ldots,M
\ee
spanning $\SW$.
\begin{remark}
Since $\SV \cap \SWC = \emptyy$,  and the set $\{ v_i\}_{i=1}^M$ is 
assumed to be
linearly independent,  the set $\{ u_i\}_{i=1}^M$ is also
linearly independent. 
\end{remark}
\begin{proof}
Suppose that $\sum_{i=1}^M b_i u_i=0$ for some set of numbers 
$\{b_i\}_{i=1}^M$ .
Then \eqref{defu} implies $g= \op_{\SWC} g$ for
$g= \sum_{i=1}^M b_i v_i$. Since by definition $g \in \SV$,
and $\SV \cap \SWC = \emptyy$ by hypothesis,
we conclude that $g=0$. Hence, the fact that the set
$\{v_i\}_{i=1}^M$ is linearly
independent implies that $b_i=0,\,i=1,\ldots,M$, which establishes that the
set $\{u_i\}_{i=1}^M$ is linearly independent.
\end{proof}
\begin{remark}
Conversely, the fact that nonzero vectors constructed as in
\eqref{defu} are linearly independent implies that $\SV \cap \SWC = \emptyy$
\cite{Reb07b}.
\end{remark}
In order to render the numerical calculation of the 
vectors $w_i, \,i=1,\ldots,M$ spanning $\SW$ as stable as possible, 
it is  convenient to orthogonormalize vectors $u_i,\,i=1,\ldots,M$  to 
obtain the vectors $q_i,\,i=1,\ldots,M$ satisfying 
$\la q_i , q_j \ra=\delta_{i,j}$.
With these vectors we construct the $M\times M$ matrix $G$ having elements 
$\la q_i , v_j \ra$.  For the situation considered here this matrix
has an inverse. Let us denote the element $(i,j)$  of $G^{-1}$ as
$g^{-1}_{i,j}$ and  construct the corresponding vectors 
$w_i,\, i=1,\ldots,M$ as prescribed in \eqref{wdu} or \eqref{wdu2}. 
As will be illustrated by the numerical example below, in spite of the 
fact that `theoretically' $\SV \cap \SWC = \emptyy$, numerical errors,
due to the existence of small singular values, may cause the 
failure to find the unique signal splitting that theoretically 
one should expect.

{\bf{Example 1.}} Let $\SV$ be the cardinal cubic spline 
space with distance $0.065$
between consecutive knots, on the interval $[0,10]$. This is 
a subspace of dimension $M=163$, which we span using a
B-spline basis $\{B_i(x),\,x\in [0,10]\}_{i=1}^{163}$.
The background we wish to filter belongs
to the subspace $\SWC$ spanned by the set of functions
$\{y_i(x)=(x+1)^{-0.05i},\,x\in [0, 10]\}_{i=1}^{50}$. 
Here the inner product is defined  as 
$\la f, g\ra=\int_{0}^{10} f(x)^\ast g(x)\,dx$, and all 
the integrals are computed numerically. 

This example is very illustrative of how sensitive to 
numerical errors the computation of oblique 
projectors is.
The subspace we are dealing with are disjoint: the 
last five singular values of the corresponding 
matrix $G$ are: 
$$0.2305,  0.2298, 9.3211 \times 10^{-4}, 
2.5829 \times 10^{-6}, 2.5673 \times 10^{-7},$$
while the first is $\sigma_1=1.5018$. 
The smallest singular value  cannot be considered a 
numerical representation of zero when the calculations are being 
carried out in double precision arithmetic. Hence, one can assert that 
the condition $\SV \cap \SWC = \emptyy$ is fulfilled. 
However, due to the three small singular values
the computation of the measurement vectors in the whole subspace $\SW$  
is inaccurate enough to cause the failure to correctly separate signals  
in $\SV$ from their background. The    
left graph of Figure 1 
is generated by a random superposition or $K=70$ B-splines added to 
a background in the given $\SWC$.
The broken line in the right graph  represents  
the oblique projection onto the given $\SV$ 
along $\SWC$. As can be seen, the projection 
does not produce the required signal, which is 
represented by the continuous dark line in the same graph.   
Now, since the spectrum of singular values has a clear jump 
(the last three singular values are far from the previous ones)
it might seem that one could regularize the calculation by truncation 
of singular values. Nevertheless, such a methodology turns out to be not
appropriate for the present problem, as it does not yield the correct 
separation. The light lines in the 
right graph of Figure 1 depict the three approximations 
obtained by neglecting  one, two and three singular values. 

Propositions \ref{tru1} below  analyzes the effect that regularization 
by truncation  of singular values produces in the resulting 
 projection. 

\begin{figure}[!ht]
\label{f1}
\begin{center}
\includegraphics[width=8cm]{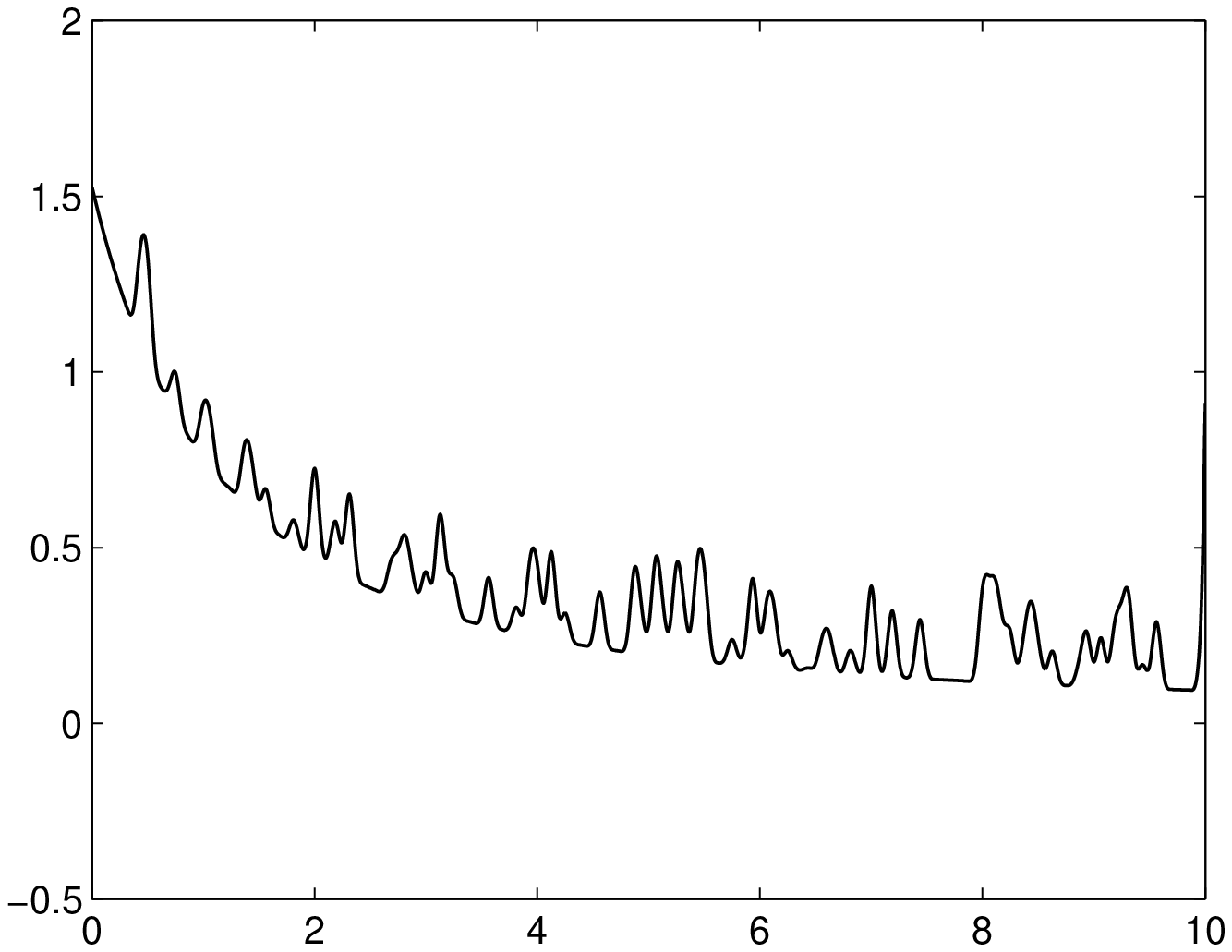}
\includegraphics[width=8cm]{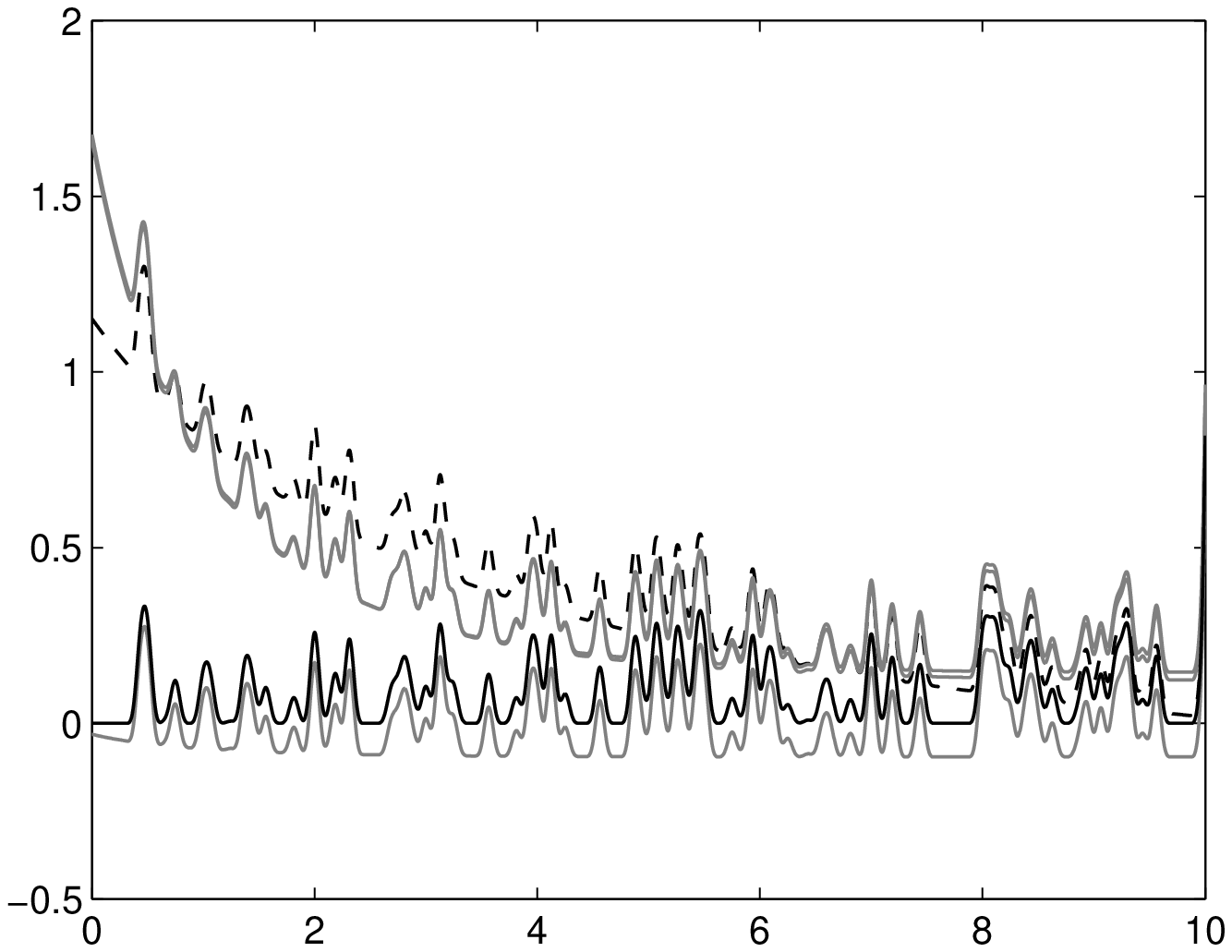}
\end{center}
\caption{Left graph: signal plus background.
Right graph: the dark continuous line corresponds to the signal to be 
discriminated from the one in the left graph. The broken line corresponds
to the approximation resulting from the oblique projection. The light 
lines correspond to the approximations obtained  by  
truncation of singular values (the one closest to the required 
signal correspond to the truncation of three singular  values).}
\end{figure}

\begin{proposition}
\label{tru1}
Truncation of the expansion \eqref{evw2} to consider up to $r$ terms, 
produces an oblique projector 
along $\SWr=\SWC + \SWo + \SVo$, with $\SVo= \Span\{\eta_i\}_{i=r+1}^N$
and $\SWo= \Span\{\xi_i\}_{i=r+1}^N$, onto $\SVr = \Span\{\eta_i\}_{i=1}^r$. 
\end{proposition}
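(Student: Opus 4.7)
The plan is to show, in order, that the truncated operator $\hat{E}_r := \sum_{n=1}^r \eta_n \la \xi_n,\cdot\ra$ (i) is idempotent, (ii) has range equal to $\SVr$, and (iii) has null space equal to $\SWr$. These three facts together say exactly that $\hat{E}_r$ is the oblique projector onto $\SVr$ along $\SWr$.

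Idempotency is a one-line calculation using the biorthogonality $\la\xi_m,\eta_n\ra=\delta_{m,n}$ from Proposition~\ref{bio}: the double sum defining $\hat{E}_r^2$ collapses through the Kronecker delta back to $\hat{E}_r$. The range is trivially contained in $\Span\{\eta_1,\ldots,\eta_r\}=\SVr$, and since $\hat{E}_r\eta_m=\eta_m$ for $m\le r$ (again by biorthogonality), it equals $\SVr$.

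The substantive step is identifying the null space. Linear independence of $\{\eta_n\}_{n=1}^r$ shows $\hat{E}_r f=0$ iff $\la\xi_n,f\ra=0$ for every $n\le r$, and I would verify separately that each of the three summands of $\SWr$ sits in this set: for $f\in\SWC$ one has $\la\xi_n,f\ra=0$ because $\xi_n\in\SW$; for $f\in\SWo$ one uses the orthonormality of $\{\xi_n\}_{n=1}^N$, which follows from \eqref{xi} together with $\psi_n$ being orthonormal eigenvectors of the Hermitian matrix $G$; and for $f=\eta_j$ with $j>r$ the biorthogonality gives $\la\xi_n,\eta_j\ra=0$ directly. This establishes $\SWr\subseteq\mathrm{Null}(\hat{E}_r)$.

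For the reverse containment I would split any $f\in\SH$ as $f=\op_\SW f+\op_{\SWC}f$; the kernel condition constrains only $\op_\SW f$, which is then forced to lie in $\Span\{\xi_{r+1},\ldots,\xi_N\}=\SWo$ since $\{\xi_n\}_{n=1}^N$ is an orthonormal basis of $\SW$. This already yields $\mathrm{Null}(\hat{E}_r)=\SWC+\SWo$, and matches the stated three-term description because $\SVo\subset\SWC+\SWo$: the identity $\op_\SW\hat V=\hat W$ recorded in the preamble gives $\op_\SW\eta_j=\hat W\psi_j/\sigma_j=\xi_j$, whence $\eta_j=\xi_j+\op_{\SWC}\eta_j$ for each $j>r$. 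The main subtlety—really the only thing to notice—is this redundancy of the $\SVo$ summand in the description of $\SWr$; without recognizing it one might suspect the claimed null space is strictly larger than the actual one. Everything else is a direct consequence of biorthogonality and the orthonormality of the singular vectors $\xi_n$ within $\SW$.
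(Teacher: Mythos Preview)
Your argument is correct and uses the same ingredients as the paper's proof—idempotency and range via the biorthogonality of Proposition~\ref{bio}, and the null-space inclusions via the orthonormality of $\{\xi_n\}$ together with $\xi_n\in\SW$. You in fact go further than the paper: its proof checks only that $\SVo$ and $\SWo$ lie in the kernel (leaving $\SWC$ implicit and never addressing the reverse inclusion), whereas you identify the kernel exactly as $\SWC+\SWo$ and correctly observe that the $\SVo$ summand in the stated $\SWr$ is redundant. The only minor caveat is that your splitting $f=\op_\SW f+\op_{\SWC}f$ presumes $f\in\SS=\SW\oplus\SWC$ rather than arbitrary $f\in\SH$; since $\SVr+\SWr=\SS$, this is precisely the domain on which the projector is meant to act, so the restriction is harmless.
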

\begin{proof}
The biorthogonality between  $\{\xi\}_{i=1}^r $ and  
$\{\eta_i\}_{i=1}^r$ established in
Proposition \ref{bio} ensures that 
$\EVWr=\sum_{i=1}^r \eta_i \la \xi_i , \cdot \ra$ is a projector, since
$\EVWr^2=\EVWr$. 

As established in Proposition \ref{bio}, 
 $\SV=\Span\{\eta_i\}_{i=1}^N$, 
and therefore every $f\in \SV$  can be 
decomposed as $f= f_r + f_o$ with $f_r \in \Span\{\eta_i\}_{i=1}^r$ and 
$f_o \in \Span\{\eta_i\}_{i=r+1}^N$. Moreover, $\EVWr f= f_r, 
\EVWr f_r= f_r$, and $\EVWr f_o= 0$, which proves that the 
projection is onto $\SVr$ and $\SVo$ is included in the 
null space of $\EVWr$. Equivalently, for  every 
$g_o \in \SWo=\Span\{\xi_i\}_{i=r+1}^N$ we have
$\EVWr g_o=0$, because the set $\{\xi_i\}_{i=1}^N$ is orthonormal. 
Thus, $\SWo$ is included in the null space of $\EVWr$.
\end{proof}

\subsection{Getting ready for a greedy search of the sparse solution} 
We discuss here the properties that will be of assistance in the next 
section, where we will 
present our strategy for the search of the sparse representation 
achieving the desired signal discrimination.
The goal is to avoid the computation of the measurement vectors 
in the whole subspace.
Instead, we strive to find the subspace $\SV_K \subset \SV$, where the
signal component one wants to  discriminate from the noise
is assumed to lie.  
We  work under the hypothesis that the subspace $\SWC$ is given and
fixed. Furthermore, $\SV \cap \SWC = \emptyy$,
which implies that
there exists a unique solution for the signal splitting.
The problem 
we need to address arises from the fact that, if 
the subspaces $\SV$ and $\SWC$ are not well separated,  
the numerical calculation of the  measurement vectors 
is not accurate (due to the numerical operations being carried 
out in finite precision arithmetic). As a consequence, the  
representation of the corresponding projector fails to produce the 
correct signals separation. 

Assuming that we are able to accurately compute in finite precision arithmetic
 $r$  measurement vectors,   
we could attempt to filter structured 
noise of a signal belonging to a subspace spanned by at most $r$ of 
such vectors (i.e. the expansion of the signal in $\SV$ should have at most 
$r$ nonzero coefficients). However, even possessing this knowledge about 
the signal, the problem of finding the right subspace would be in general
intractable: out of a set of cardinality $M$ there exist $\tbinom{M}{r}$   
possible subsets of cardinality $r$.
An adaptive  strategy for the
subspace selection, given a signal, is advanced in \cite{Reb07b}. 
Before revising and extending that strategy  we need to recall two 
relevant properties of oblique projectors.
\begin{property}
\label{pro1}
The oblique projector $\EVW$ satisfies
$\op_{\SW} \EVW= \op_{\SW}$.
\end{property}
\begin{proof} 
It readily follows by applying $\op_{\SW}$ on both sides of 
\eqref{evw} or \eqref{evw2}. Since $\la u_i,v_j\ra = \la u_i, u_j\ra$, 
one has
\be
\label{pw}
\op_{\SW} \EVW= \sum_{i=1}^M u_i \la w_i , \cdot \ra= \op_{\SW}.
\ee
Moreover, since $\op_{\SW} \eta_i= \xi_i$, considering \eqref{evw2} 
we have
\be
\label{pw2}
\op_{\SW} \EVW = \sum_{i=1}^N \xi_i \la \xi_i , \cdot \ra= \op_{\SW}.
\ee
\end{proof}
\begin{property} 
\label{orcom}
Given a signal $f$ in 
$\SV + \SWC= \SW \oplus\SWC$, the only vector $g\in \SV$
satisfying 
\be
\label{co2}
\op_{\SW}f= \op_{\SW}g
\ee
is $g= \EVW f$.
\end{property}
\begin{proof}
If $g= \EVW f$ \eqref{co2} trivially follows from Property \ref{pro1}.
Let us assume  now that there
exists $g \in \SV$ such that  \eqref{co2} holds.
Then $\op_{\SW} (f-g)=0$, i.e.,
$(f-g)\in \SWC$. Hence $ \EVW(f-g)=0$ and,
since  $g \in \SV$,  this implies that
$\EVW f=g$.
\end{proof}
Let us suppose that $\SVK= \Span\{v_i\}_{i=1}^k$ is given and 
the spanning set is linearly independent. Assuming that
$\SVK \cap \SWC = \emptyy$ we guarantee that the set of 
vectors $\{u_i\}_{i=1}^k$, with $u_i$ given in \eqref{defu} is also linearly 
independent. Therefore the dimension of $\SVK$ is 
equal to the dimension of $\SWK= \Span \{u_i\}_{i=1}^k= \Span{
\{\wtik\}_{i=1}^k}$. 
We use now a superscript $k$ to indicate that the measurement 
vectors $\{\wtik\}_{i=1}^k$  span $\SWK$. 
Hence these vectors give rise to  
the oblique projection of a signal $f$, onto $\SVK$ and
along $\SWC$, as given by:
\be
\label{ato}
\EVKW f =\sum_{i=1}^k v_i \la  \wtik , f \ra = 
\sum_{i=1}^k \cik v_i. 
\ee
It is clear from \eqref{ato} that if the atoms in the  atomic 
decomposition were to be changed (or some  
atoms  were added to or deleted from the decomposition) 
the measurement vectors $\wtik$, and consequently the coefficients $\cik$
in \eqref{ato}, would need to be modified. The recursive equations 
below provide an effective way of implementing the task.\\ 

{\em{Forward/backward adapting of measurement vectors}}\\

Starting  with $\wt_1^1=\frac{u_1}{||u_1||^2}$, and $u_1$ as in \eqref{defu},
the measurement vectors $ \wt_{i}^{k+1} ,i=1\ldots,k+1$ can be recursively 
constructed from $\wtik,\,i=1\ldots,k$ as follows \cite{Reb07a}:
\ben
\label{eq}
\wtikp&=&\wtik - \wt_{k+1}^{k+1} \la u_{k+1}, \wtik \ra,
\quad i=1,\ldots,k \label{wik}\\
\wt_{k+1}^{k+1}&=& \frac{\gamma_{k+1}}{||\gamma_{k+1}||^2},\quad
\gamma_{k+1}= u_{k+1}-\op_{\SWK}u_{k+1}, \label{wkkp}
\een
where $\op_{\SWK}$ is the orthogonal projector onto $\SWK=
\Span\{u_i\}_{i=1}^k$. We note that, since $\op_{\SW} \wtik= \wtik$, 
\eqref{wik} can also be written as
\ben
\wtikp&=&\wtik - \wt_{k+1}^{k+1} \la v_{k+1}, \wtik \ra,
\quad i=1,\ldots,k. \label{wik2}
\een
It follows from the above equations that when incorporating a linearly
independent atom 
$v_{k+1}$ in the atomic decomposition \eqref{ato}, the coefficients 
can be conveniently modified according to the recursive equations
\ben
\label{recf}
c_{k+1}^{k+1}&=& \la  \wt_{k+1}^{k+1} , f \ra, \\
c_i^{k+1}&= &\la \wtikp , f \ra= c_i^{k} - c_{k+1}^{k+1} \la \wtik, v_{k+1} \ra,
\quad i=1,\ldots,k. \label{wik22}
\een
Conversely, considering that the atom, $v_j$ say, is to be removed 
from the atomic decomposition \eqref{ato}, and 
denoting  the corresponding subspaces 
$\SVKJ$ and  $\SWKJ$, in  
order to span $\SWKJ$
the measurement vectors $\ut_i^{\kj},\,i=1,\ldots,k$ are modified 
according to the equation \cite{Reb07a}
\be
\label{duba}
\ut_i^{\kj}=\utik-\frac{\ut_j^k \la \ut_j^k , \utik\ra}{||\ut_j^k||^2},\quad
i=1,\ldots, j-1, j+1, \ldots, k.
\ee
Consequently, the coefficients in \eqref{ato} should be changed to
\be
\label{coba}
c_i^{\kj}=\cik-\frac{c_j^k \la \utik , \ut_j^k \ra}{||\ut_j^k||^2},\quad
i=1,\ldots, j-1, j+1, \ldots,k.
\ee
\section{Adaptive pursuit strategy for subspace selection} 
\label{sec4}
Given a signal $f$, we aim at finding the subspace $\SV_K \subset \SV$ 
where the signal belongs.  Let us stress once again that
the problem arises from the impossibility of correctly computing the 
measurement vectors spanning the whole subspace $\SW$.  
Otherwise the right subspace is determined simply by the 
indices corresponding to the atoms  having 
nonzero coefficients in the full atomic decomposition \eqref{ato}.

Since  $\SV_{k+1}+ \SWC =\SW_{k+1} \oplus \SWC$
the forward selection criterion we propose  is based on 
Property \ref{orcom}, which implies that if a given $f$
satisfies $\op_{\SW} f = \op_{\SWK} f$ it also satisfies $\EVW f = \EVKW f$. 
Thus, by fixing $\op_{\SWK}$,  at iteration $k+1$ we select the index 
$\ell_{k+1}$ such that 
$||\op_{\SW} f- \op_{\SWKP}f ||^2$ is minimized. 
\begin{proposition} 
Let us denote $J$ to the set of indices $i=1,\ldots,M.$
Given $\SWK$, the index $\ell_{k+1}$ corresponding the atom 
$u_{\ell_{k+1}}$ in the set $\{u_i\}_{i\in J}$
for which $||\op_{\SW} f- \op_{\SWKP}f||^2$ is minimal
is to be determined as
\be
\label{oomp}
 \ell_{k+1}= \arg\max\limits_{n \in J \setminus J_{k}}
\frac{|\la\gamma_n,f\ra|}{\|\gamma_n\|},\,\gamma_n \neq 0,
\ee
with $\gamma_n$ given in \eqref{wkkp}, and $J_{k}$ the set 
of indices that have been previously chosen to determine $\SWK$.
\end{proposition}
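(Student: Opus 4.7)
The plan is to reduce $\|\op_{\SW}f - \op_{\SWKP}f\|^2$ to a single scalar expression that depends on $n$ only through $\gamma_n$ and $f$. First I would observe that because $\SWKP \subset \SW$, the orthogonal projector $\op_{\SW}$ acts as the identity on $\op_{\SWKP}f$, so a short calculation gives the Pythagoras-type identity
\be
\|\op_{\SW}f-\op_{\SWKP}f\|^2 = \|\op_{\SW}f\|^2 - \|\op_{\SWKP}f\|^2.
\ee
The first term on the right does not depend on the candidate index $n$, so minimizing the left-hand side over $n \in J \setminus J_k$ is equivalent to maximizing $\|\op_{\SWKP}f\|^2$.

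Next I would exploit the decomposition $\SWKP = \SWK \oplus \Span\{\gamma_{n}\}$ built into \eqref{wkkp}: by construction $\gamma_{n} = u_{n} - \op_{\SWK}u_{n}$ is orthogonal to $\SWK$ (and nonzero exactly when $u_{n}$ is not already in $\SWK$, which is the condition imposed in \eqref{oomp}). Since the two summands are orthogonal, the projector splits additively and Pythagoras yields
\be
\|\op_{\SWKP}f\|^2 = \|\op_{\SWK}f\|^2 + \frac{|\la \gamma_{n}, f\ra|^2}{\|\gamma_{n}\|^2}.
\ee
Again the first term is fixed once $\SWK$ is fixed, so minimizing $\|\op_{\SW}f-\op_{\SWKP}f\|^2$ reduces to maximizing $|\la \gamma_{n},f\ra|/\|\gamma_{n}\|$ over $n \in J \setminus J_k$, which is exactly the selection rule \eqref{oomp}.

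Two small bookkeeping points I would make explicit. First, adjoining index $n$ to $J_k$ keeps the enlarged family $\{u_i\}_{i\in J_k \cup \{n\}}$ linearly independent precisely when $\gamma_{n}\neq 0$; this justifies restricting the maximum to indices with $\gamma_{n}\neq 0$ and guarantees that $\SWKP$ has the expected dimension $k+1$, so Property~\ref{pro1} and the recursive update \eqref{wkkp} apply consistently. Second, the identity $\op_{\SW}\op_{\SWKP} = \op_{\SWKP}$ used in the very first step rests only on the inclusion $\SWKP \subset \SW$, which is immediate from the definition of $\SW$ as the span of all $\{u_i\}_{i\in J}$.

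I do not expect any genuine obstacle: the argument is two applications of Pythagoras, the first with respect to the decomposition $\SW = \SWKP \oplus (\SW \cap \SWKP^{\bot})$ and the second with respect to $\SWKP = \SWK \oplus \Span\{\gamma_{n}\}$. The only care needed is to keep track of what is constant in $n$ (namely $\|\op_{\SW}f\|^2$ and $\|\op_{\SWK}f\|^2$) so that the criterion $\arg\max |\la \gamma_{n},f\ra|/\|\gamma_{n}\|$ emerges cleanly.
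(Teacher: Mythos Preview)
Your proposal is correct and follows essentially the same route as the paper: both arguments rest on the decomposition $\op_{\SWKP}f=\op_{\SWK}f+\frac{\gamma_n\la\gamma_n,f\ra}{\|\gamma_n\|^2}$ together with Pythagoras to obtain $\|\op_{\SW}f-\op_{\SWKP}f\|^2=\|\op_{\SW}f\|^2-\|\op_{\SWK}f\|^2-\frac{|\la\gamma_n,f\ra|^2}{\|\gamma_n\|^2}$, and then observe that only the last term depends on $n$. You simply make the two Pythagoras steps explicit where the paper compresses them into a single line.
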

\begin{proof}
It readily follows  since $\op_{\SWKP} f= \op_{\SWK} f+ 
\frac{\gamma_n \la\gamma_n,f\ra}{\|\gamma_n\|^2}$ and hence
$||\op_{\SW} f - \op_{\SWKP} f ||^2=  ||\op_{\SW} f || - || \op_{\SWK} f ||^2 -
\frac{|\la \gamma_n,f \ra |^2}{\|\gamma_n\|^2}   $.  Because  
$\op_{\SW} f$ and $\op_{\SWK}f$ are
fixed, $||\op_{\SW} f - \op_{\SWKP} f||^2$ is minimized if 
$\frac{|\la\gamma_n,f\ra|}{\|\gamma_n\|},\,\gamma_n \neq 0$ is maximal 
over all $n\in J\setminus J_{k}$.
\end{proof}
\begin{remark} 
Since $\op_{\SWKP} = \op_{\SW}\EVWKP$ we can 
write
$||\op_{\SW} f - \op_{\SWKP} f ||^2= 
||\op_{\SW}(f -  \EVWKP f) ||^2$, and 
the condition of the previous proposition can be seen as the condition 
for minimizing the distance of $\EVWKP f$ to $f$, with 
respect to the weighted seminorm $\|\cdot\|_{P_{\SW}}$ induced by
the weighted inner product $\la\cdot,\cdot\ra_{P_{\SW}}$ defined as
$\la f, g\ra_{P_{\SW}}=\la f,{P_{\SW}}g\ra.$
\end{remark}
The OBMP selection criterion given in 
\cite{Reb07b}, which is based on the consistency principle  
 \cite{UA94,Eld03}, selects the index 
$\ell_{k+1}$ as the maximizer over $n \in J\setminus J_{k}$ of
$$\frac{|\la\gamma_{n},f\ra|}{\|\gamma_{n}\|^2},\quad ||\gamma_n||\neq 0.$$
This condition was proposed in \cite{Reb07b} so as to select the measurement 
vector  $\wtkp_{k+1}$
producing the maximum consistency error
$\Delta= |\la \wtkp_{k+1}, f - \EVKW f \ra|$, with regard to 
a new  measurement $\wtkp_{k+1}$.  However, since the measurement 
vectors are not normalized to unity, it is sensible to consider 
the consistency error relative to the corresponding 
measurement vector norm $||\wtkp_{k+1}||$, and select the index  
so as to maximize over $k+1 \in J \setminus J_{k}$ 
the relative consistency error 
\be
\label{reler}
\til{\Delta}= \frac{|\la \wtkp_{k+1}, f - \EVKW f \ra|}{||\wtkp_{k+1}||}, 
\quad ||\wtkp_{k+1}|| \ne 0
\ee
\begin{property}
The index $\ell_{k+1}$ satisfying \eqref{oomp}  maximizes over $k+1 \in J\setminus J_{k}$ the 
relative consistency error \eqref{reler}
\end{property}
\begin{proof}
Since for all vector $\wtkp_{k+1}$  given in 
\eqref{wkkp} $\EVKW \wtkp_{k+1}=0$ and $||\wtkp_{k+1}||= ||\gamma_{k+1}||^{-1}$ 
we have
$$\til{\Delta}= \frac{|\la \wtkp_{k+1}, f \ra |}{||\wtkp_{k+1}||}= 
\frac{|\la \gamma_{k+1}, f \ra |}{||\gamma_{k+1}||}.$$
Hence, maximization of $\til{\Delta}$ over $k+1 \in J\setminus J_{k}$ is equivalent 
to \eqref{oomp}. 
\end{proof}
It is clear at this point that the forward selection of 
indices prescribed by proposition \eqref{oomp} is equivalent to 
selecting the indices by applying 
the Optimized Orthogonal Matching Pursuit (OOMP) \cite{RL02}
strategy on the projected signal $\op_{\SW}f$ using the 
dictionary $\{u_i\}_{i\in J}$. 

The hypothesis that the computation of more than $r$ measurement 
vectors becomes an ill posed problem enforces the 
forward selection of indices to stop if iteration $r$ is reached.
Nevertheless, the fact that 
the signal is assumed to be $K$-sparse, with $K \le r$, does not
imply that before (or at) iteration $r$ one will always 
find the correct subspace. 
The $r$-value just indicates that it is not possible to 
continue with the forward selection, because 
the computations would become inaccurate and unstable. 
Hence, if the right solution was not yet found, one needs 
to implement a strategy  accounting for the fact that it is not feasible
to compute more than $r$ measurement vectors.  An 
adequate procedure is achieved by means of the swapping-based 
refinement to the OOMP approach introduced in 
\cite{AR06}. As discussed below, it consists of interchanging 
already selected atoms with nonselected  ones.

Consider that at iteration $r$ the correct subspace has not appeared yet
 and the selected indices are labeled  by the $r$
indices $\ell_1,\ldots,\ell_r$. 
In order to choose the label of the atom that minimizes the norm of the 
residual error as passing 
from approximation $\op_{\SWKr} f$ to approximation $\op_{\SWKrJ}f$ 
we should fix the index of the atom to be deleted, $\ell_j$ say,
as the one for which the quantity
\be
\label{swab}
\frac{|c_i^r|}{||w_i^r||}
\ee
is minimized $i=1,\ldots,r$ \cite{ARS04,AR06}.

The process of eliminating one atom from the atomic decomposition 
\eqref{ato} is called {\em backward step} while the  process of 
adding one atom is called {\em forward step}. 
The forward selection criterion to choose the atom to replace the 
one eliminated in the previous step is accomplished by 
 finding the index $\ell_i, i=1,\ldots,r$ for which the  
 the functional 
\be
\label{swaf}
e_n= \frac{|\la\nu_n , f \ra|}{||\nu_n||},\quad 
\text{with}\quad \nu_n =u_n - \op_{\SWKrJ} u_n,\quad ||\nu_n||\neq 0
\ee
is maximized.
In our framework, using \eqref{duba}, the projector 
$\op_{\SWKrJ}$ is computed as 
$$\op_{\SWKrJ} = \op_{\SWKr} - 
\frac{\la \ut_i^r , \ut_j^r\ra \la \ut_j^r, \cdot\ra}{||\ut_j^r||^2}.$$
Since $\op_{\SWKr}$ and $\ut_j^r$ are available, the 
computation of the sequence $\nu_n$ in \eqref{swaf} is a simple operation. 

As proposed in \cite{AR06} the swapping of pairs of atoms is 
repeated until the swapping operation, if carried out, would not 
decrease the approximation error. The implementation details 
for an effective realization of this process are given in \cite{AR06}, 
and MATLAB codes are available at \cite{Webpage}.  
Since there is no guarantee that at the end of the swapping 
of pairs of atoms the correct subspace has been found, the
process can continue by increasing the number of 
atoms the swapping involves. At the second stage,  in line with  
 \cite{AR06a} we propose the swapping 
to be realized by the combinations of two backward steps followed by 
two forward steps, provided that the interchange of the two 
atoms improves the approximation error. If at the end of the
second stage the right subspace has not yet been found, the  number of 
atoms involved in the swapping is increased up to three 
and so on. Notice that 
if the number of atoms to be interchanged reaches the value $r$ the 
whole process would repeat identically. This is avoided by 
initiating the new circle with a different initial atom. 
Although convergence cannot be guaranteed, the above specified  
hypothesis ensure that the algorithm will stop when the  correct 
signal splitting has been found. At such a stage one has
$\op_{\SW} f = \op_{\SW_r}f$ with $\SW_r$ spanned by the selected atoms 
$\ell_1,\ldots,\ell_r$. If the order $K$ of sparseness of the signal is 
less than $r$ a number of $r-K$  coefficients in the atomic 
decomposition 
$$f=\sum_{i=1}^r v_{\ell_i} \la w_i^r,f \ra = 
\sum_{i=1}^r c_i^r v_{\ell_i}$$
will have zero value.
\subsection{Examples}
Firstly we applied the proposed strategy to the numerical simulation 
of Example 1, which is a very simple test for our method and therefore 
in a run of 50 simulations we could produce the correct signals splitting  
at the stage involving forward selection only.\\

{\bf{Example 2.}} For this example we have used the same background as
in Example 1, but a dictionary of B-splines  spanning the same space 
as the basis. The dictionary consists of functions of broader 
support than the basis functions for the same space, and the 
translation parameter is reduced (for more details on the construction 
of B-splines dictionaries see \cite{AR05}, MATLAB codes 
are available at \cite{Webpage}). In this case, 
the spectrum of singular values of matrix $G$ decreases continuously, 
as shown in the top graphs of Figure 2. Since it is difficult to 
decide on where to truncate the singular values, 
for the sake of comparison with the proposed technique we 
made a signal dependent truncation. 
This was  achieved  by setting the number $Q$ of  singular values to be 
considered so as to minimize
$$||\op_{\SW} f  - \op_{\til{\SW}_Q} f||,$$
where  ${\til{\SW}_Q}$ indicates the subspace spanned by the first 
$Q$ singular vectors of the operator $\hat{W}$ (c.f. \eqref{xi})
Neither in this case the regularization by truncation of singular values  
was successful. The result is 
depicted by the lighter line in the right bottom graph of Figure 2. The 
dark line plots the sought signal. 

\begin{figure}[!ht]
\begin{center}
\includegraphics[width=7.5cm]{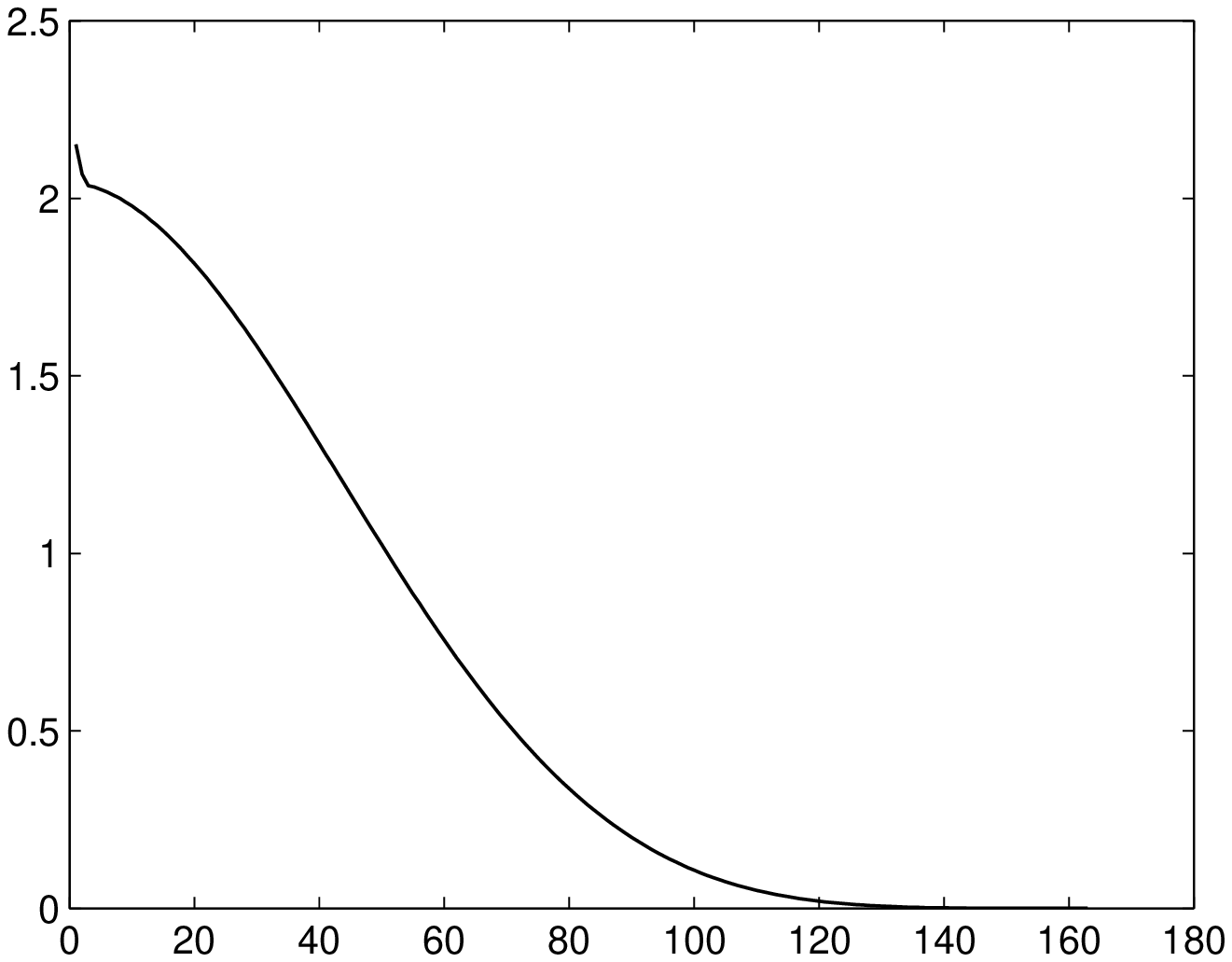}
\includegraphics[width=7.5cm]{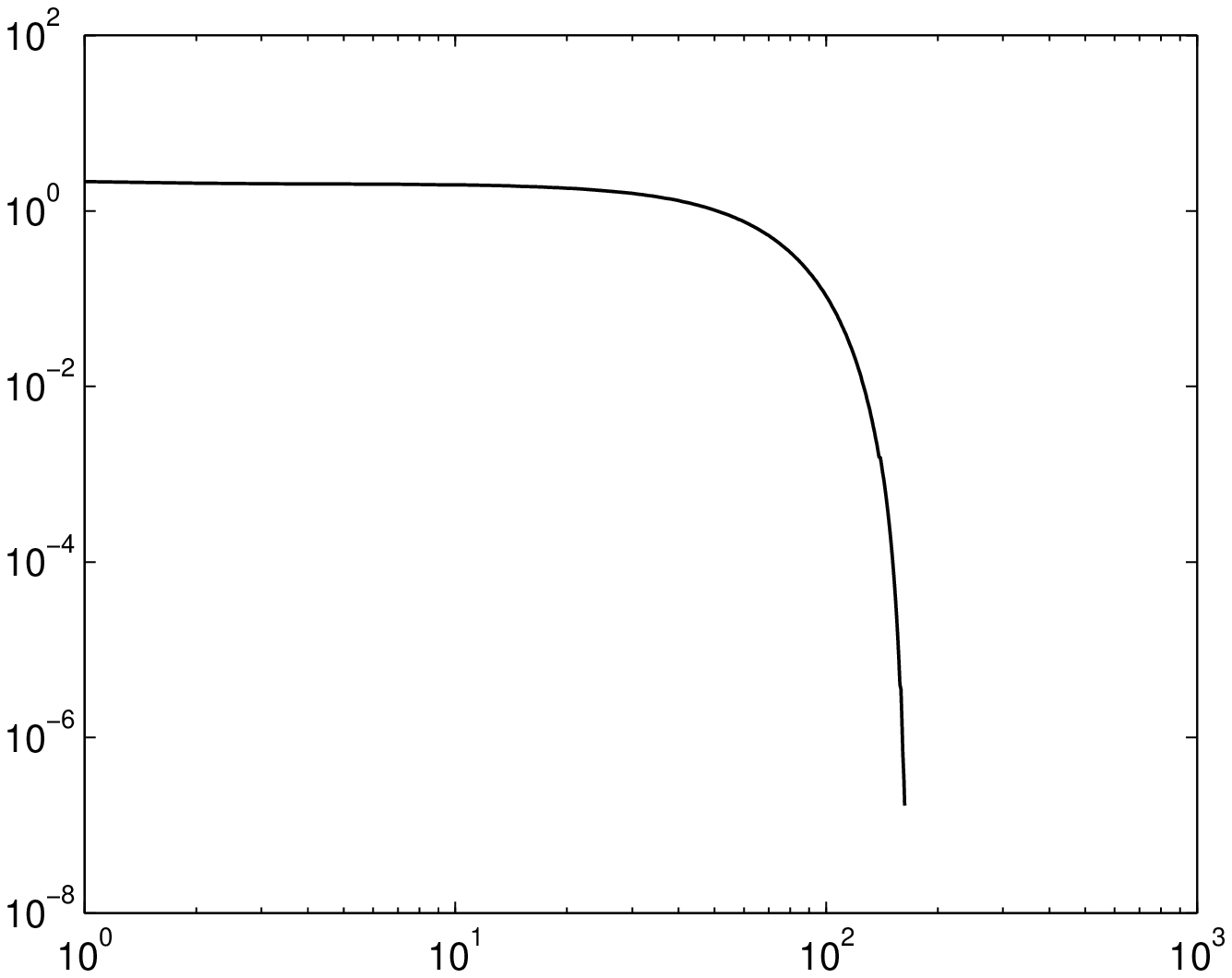}\\
\includegraphics[width=7.5cm]{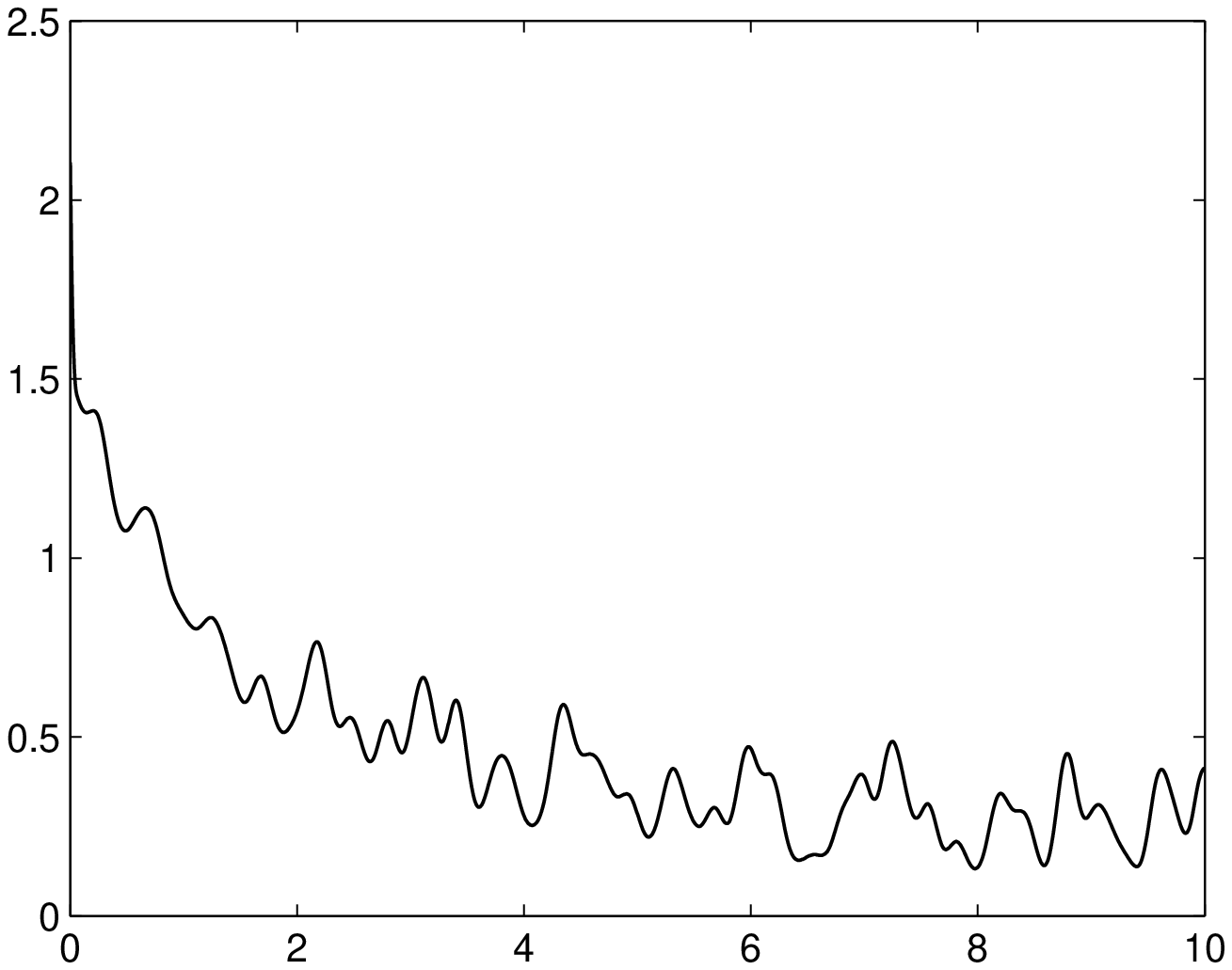}
\includegraphics[width=7.5cm]{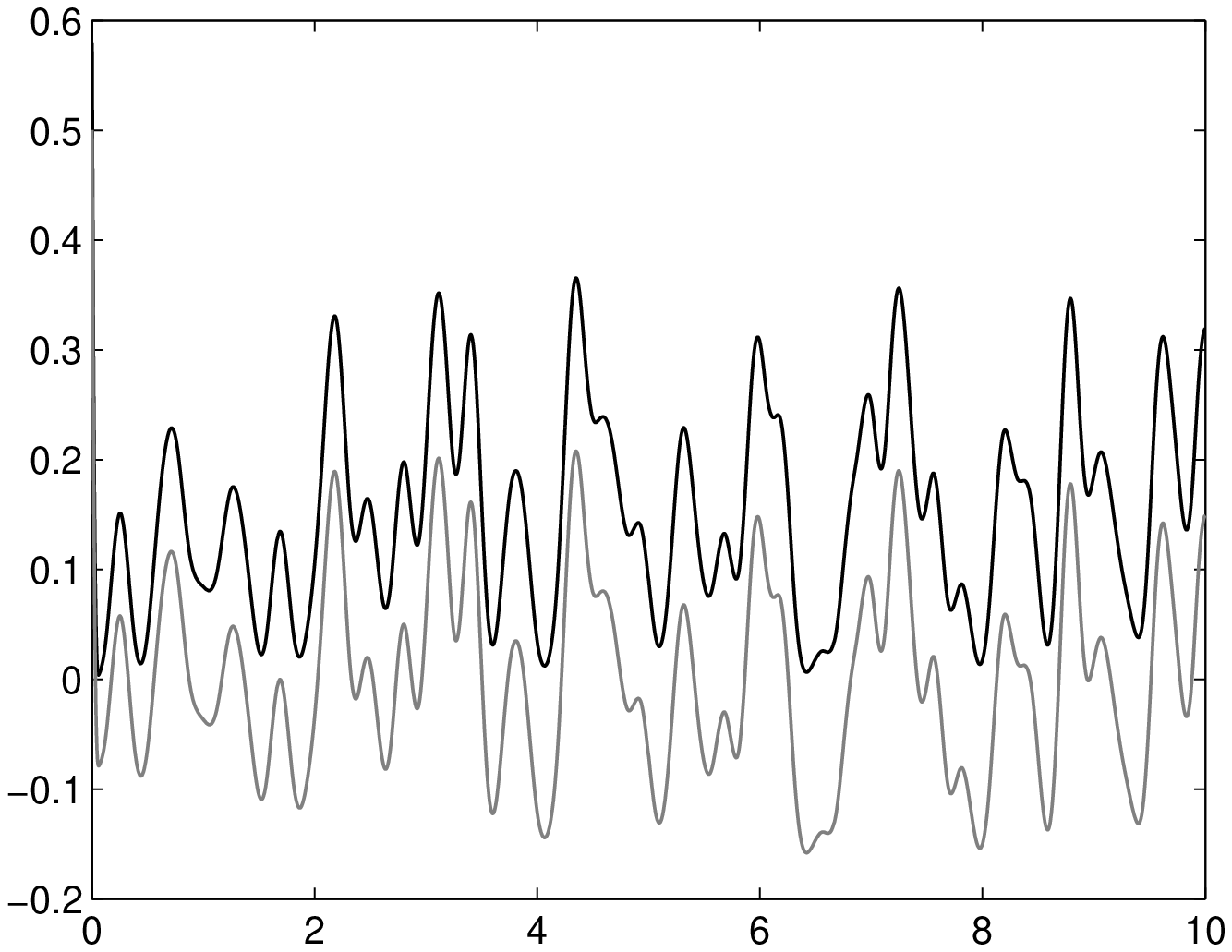}\\
\end{center}
\caption{Top graphs: spectrum of singular values of 
matrix $G$ in Example 2. The right graph plots the line in the 
left graph, but in logarithmic scale.
\newline
Bottom graphs: The one on the left shows the signal plus background.
The dark line in the right one is the signal component to be discriminated 
from the signal in the left graph. The light line represents the 
approximation obtained by truncation of singular values. The proposed 
approach reproduces exactly the dark line.}  
\end{figure}

By means of the proposed strategy we were able to find the 
correct signal splitting in the 50 simulations we ran. 
Only in one of the cases a re-initialization took place. 
Increasing the number of atoms in the simulated atomic decomposition up 
to 80, we also found the correct splitting in all the cases. In this 
simulation the re-initialization stage occurred in the case of 5 
signals, out of the 50  signal in the run. By increasing the number
of atoms up to 90, re-initialization took place in the 
case of 8 signals. As could be 
expected, due to the  singular values decay, by increasing the 
number of atoms up to $100$ in the atomic decomposition we started to 
observe instability in the calculations.

{\bf{Example 3.}} 
Here the signal space
is spanned by $M=210$ vectors in $R^{L},$ with $L=1000$, given 
as
\[\{v_i=\cos(\frac{\pi(2j-1)(i-1)}{2L}),\quad j=1,\ldots,L\}_{i=1}^M.\]
The space of the noise is spanned by the set
\[\{y_i=e^{{-35000(j-0.005i)}^2}, \quad j=1,\ldots,L\}_{i=1}^{400}.\]
We ran 50 simulations, keeping the noise fixed and 
considering a different realization of the signal, which 
was generated as a linear combination of $90$ vectors 
taken randomly from the given spanning set. 

The spectrum of singular values is depicted in the top graphs 
of Figure 3. In this case the signal dependent criterion for 
truncation does achieve the correct signal splitting. The
  left bottom graph of Figure 3  shows one of the 
realizations of the signal plus noise in the simulation. 
The dark line in the right graph of Figure 3 plots the
exact signal. The approximation obtained by truncation of singular values
is plotted with a lighter line,  which cannot be distinguished from the
dark one in the scale of the figure.
It is clear from this result that in this case the signal does not have
a significant component in the subspace spanned by the
neglected singular vectors. Similar results are obtained
in all the  other realizations in the simulation. The norm of the error
in this case is $0.53$, while the mean  value of the error norm with
respect to the 50 cases is $0.78$.

By applying the proposed strategy  for searching the  
 sparse representation we found the exact solution in the 50 cases. 
Re-initialization was necessary in 5 of the 50 realizations.

\begin{figure}[!ht]
\label{f3}
\begin{center}
\includegraphics[width=7.5cm]{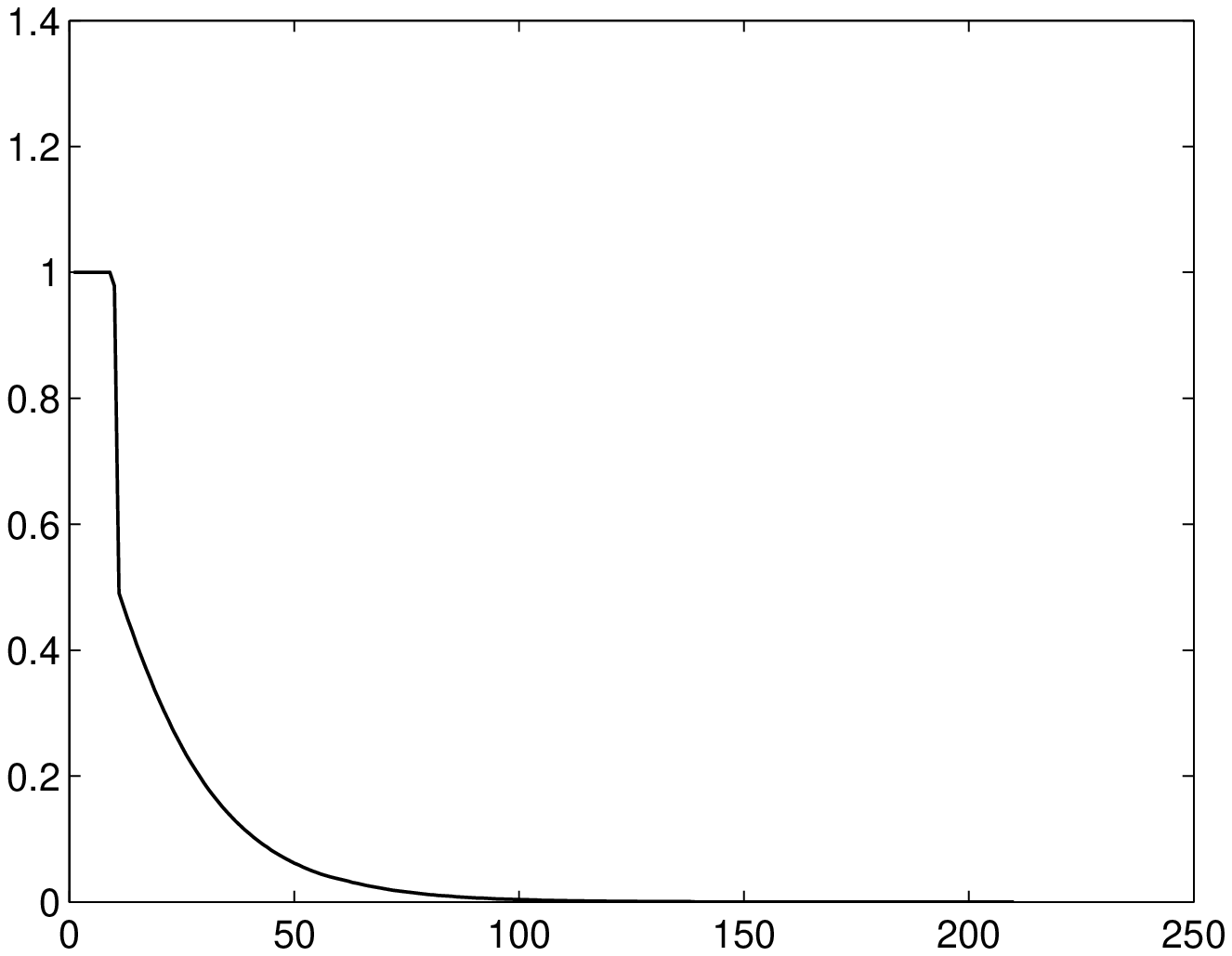}
\includegraphics[width=7.5cm]{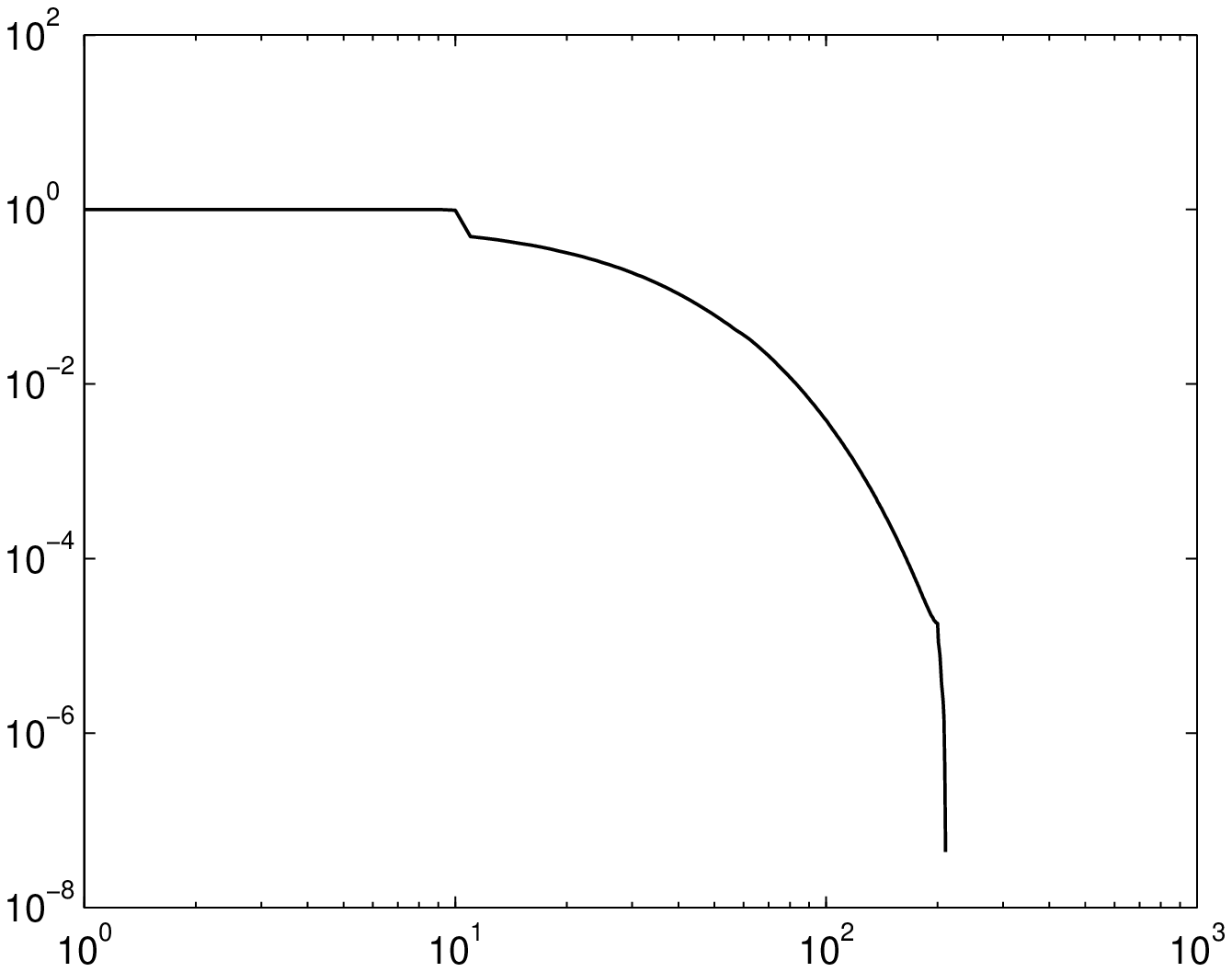}\\
\includegraphics[width=7.5cm]{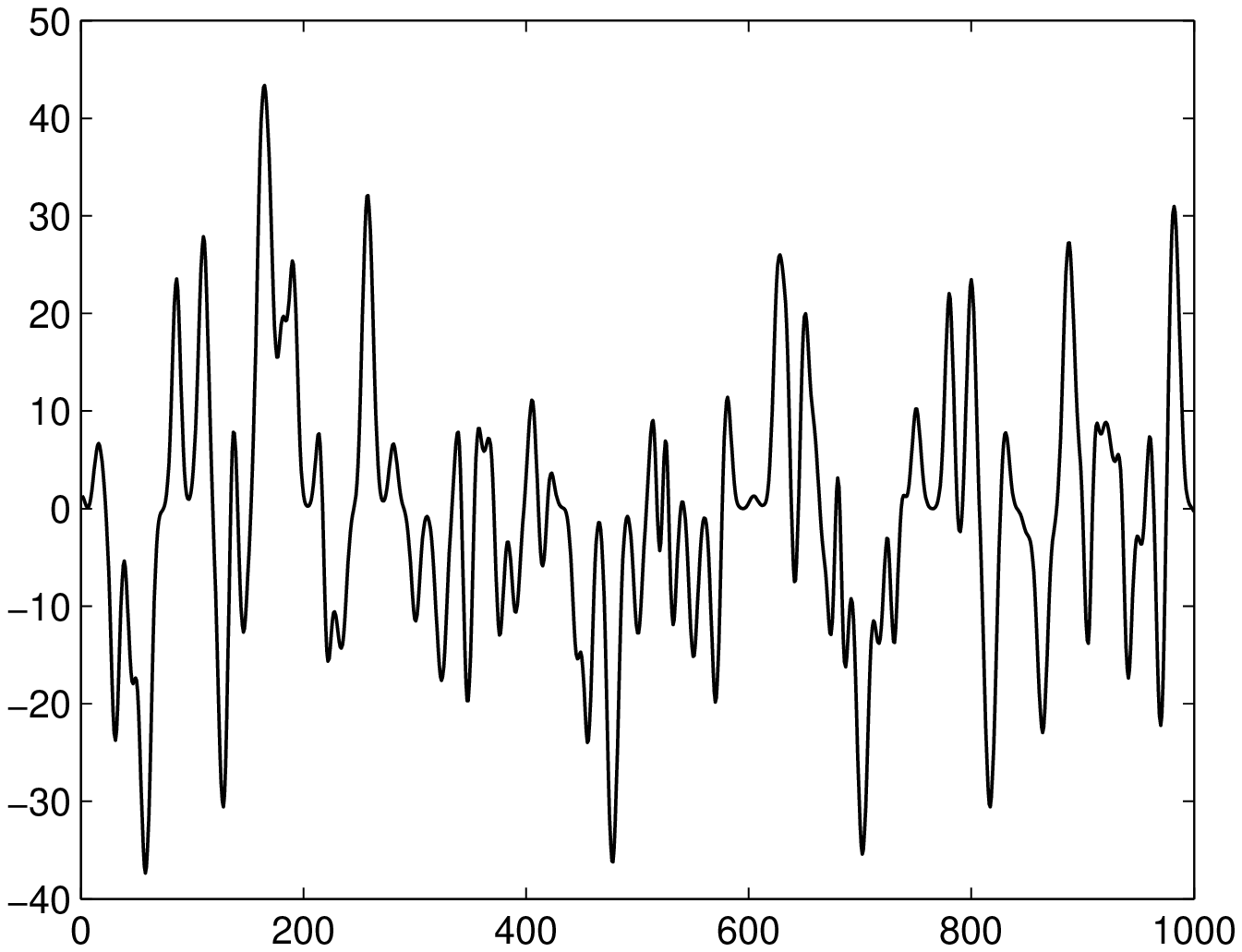}
\includegraphics[width=7.5cm]{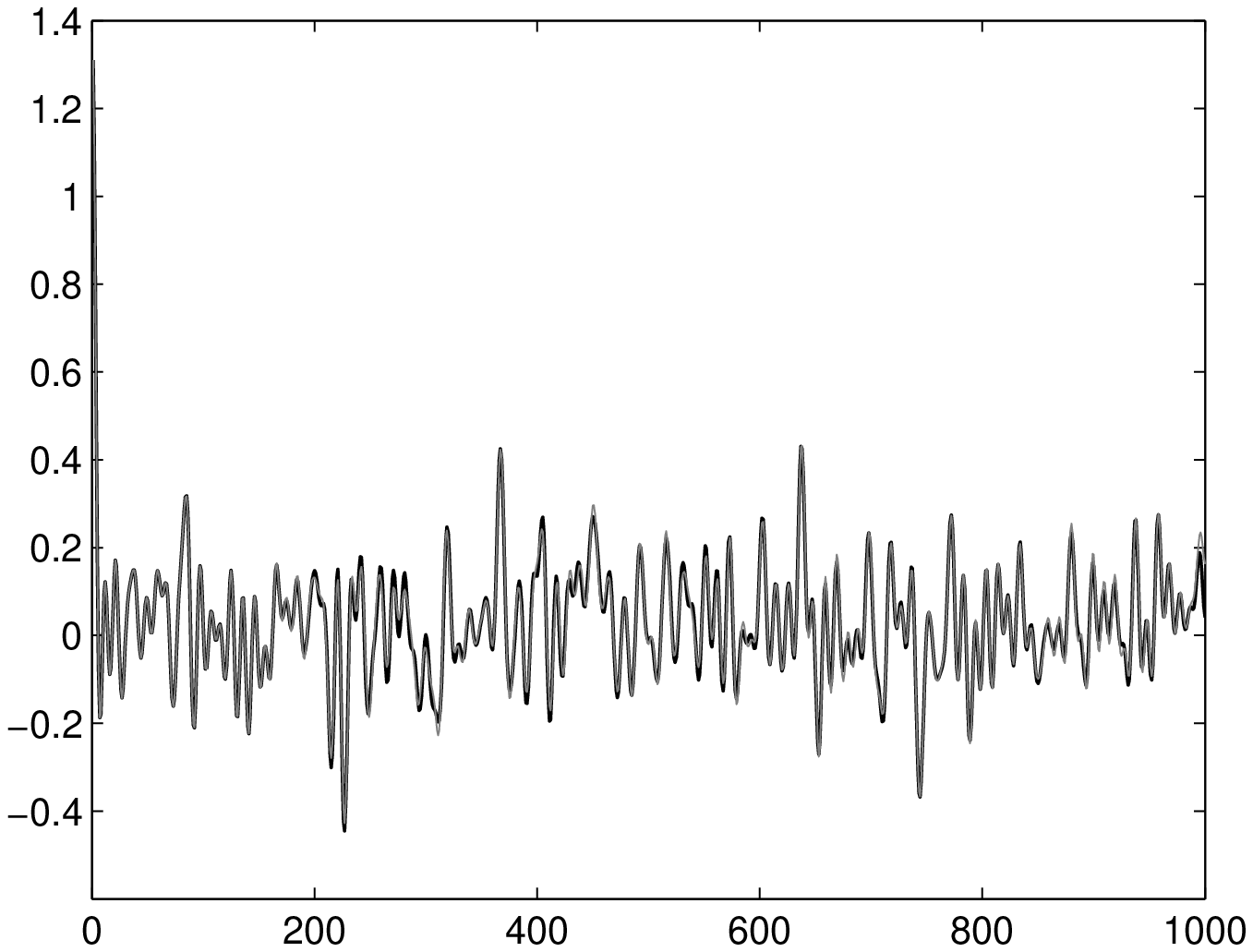}
\end{center}
\caption{Top graphs: spectrum of singular values of
matrix $G$ in Example 3. The right graph plots the line in the
left graph, but in logarithmic scale.\newline
Bottom graphs: The one on the left shows the signal plus noise.
The dark line in the right one is the signal to be separated from 
the one in the left graph. The approximation obtained by 
truncation of singular values is plotted by a lighter line, 
which cannot be distinguished from the other in the scale of the 
figure. The proposed approach reproduces the exact signal.} 
\end{figure}


\section{Conclusion}
\label{sec5}
The role of sparse representations in the context of structured noise 
filtering has been discussed. The discrimination of signal components 
is achieved by an oblique projection onto the right subspace.
Considerations were restricted to 
 those cases for which the signal subspace and the noise subspace are 
theoretically complementary, but the construction of the dual basis 
for the whole signal subspace yields an ill posed problem, due to 
the calculations being carried out in finite precision arithmetic. 
It was shown by numerical simulations that, if the signal is sparse
in a spanning set for the signal subspace, the required signal splitting 
may be achieved by means of adaptive techniques capable of 
searching for the required subspace while maintaining stability 
in the calculations. Although convergence of the proposed strategy for 
adaptive subspace search is not guaranteed, the method is capable to stop 
when the correct signal splitting is accomplished. 

For the sake of comparison, an alternative regularization technique 
based on adaptive truncation of singular 
values  was  analyzed. The main disadvantage of such a technique  lies in 
the fact that regularization is performed  by a change 
of subspaces. Consequently, in general,  
the technique does not produce the required signal splitting.  Moreover,
even when a satisfactory  splitting is attained (c.f. Example 3)
the method does not 
provide an indication that this is  so. Except for 
the very particular case in which  the  signal at hand 
has zero projection onto the  subspace 
spanned by the disregarded singular vectors, the exact solution cannot 
be produced by this technique. On the contrary, the approach based on 
the search for the sparse representation is capable of producing 
the exact solution when the method stops. Thus, if the algorithm has 
converged, one can assert that the signal splitting is the required one.

\bibliographystyle{elsart-num}
\bibliography{signal}
\end{document}